\providecommand{\U}[1]{\protect\rule{.1in}{.1in}}
\theoremstyle{plain}
\newtheorem{theorem}{Theorem}[section]
\newtheorem{lemma}[theorem]{Lemma}
\theoremstyle{definition}
\newtheorem{definition}[theorem]{Definition}
\newtheorem{example}[theorem]{Example}
\newtheorem{remark}[theorem]{Remark}
\numberwithin{equation}{section}
\begin{document}
\title[Systems with nonlinear coupled nonlocal initial conditions]{Existence results for systems with nonlinear coupled nonlocal initial conditions}
\author[O. Bolojan-Nica]{Octavia Bolojan-Nica}
\address{Octavia Bolojan-Nica, Departamentul de Matematic\u{a}, Universitatea
Babe\c{s}-Bolyai, Cluj 400084, Romania}
\email{octavia.nica@math.ubbcluj.ro}
\author[G. Infante]{Gennaro Infante}
\address{Gennaro Infante, Dipartimento di Matematica ed Informatica, Universit\`{a}
della Calabria, 87036 Arcavacata di Rende, Cosenza, Italy}
\email{gennaro.infante@unical.it}
\author[R. Precup]{Radu Precup}
\address{Radu Precup, Departamentul de Matematic\u{a}, Universitatea Babe\c{s}-Bolyai,
Cluj 400084, Romania}
\email{r.precup@math.ubbcluj.ro}
\subjclass[2010]{Primary 34A34, secondary 34A12, 34B10, 47H10}
\keywords{Nonlinear differential system, nonlocal boundary condition, nonlinear boundary
condition, fixed point, vector-valued norm, convergent to zero matrix.}
\date{}
\maketitle

\begin{abstract}
The purpose of the present work is to study the existence of solutions to
initial value problems for nonlinear first order differential systems with
nonlinear nonlocal boundary conditions of functional type. The existence
results are established by means of the Perov, Schauder and Leray-Schauder
fixed point principles combined with a technique based on vector-valued
metrics and convergent to zero matrices.

\end{abstract}

\section{Introduction}

Nonlocal problems for different classes of differential equations and systems
are intensively studied in the literature by a variety of methods (see for
example \cite{aizicovici-lee, avalshv1, avalshv2, ref4, boucherif2,
boucherif-ejde, ref6, ref7, ref8, jankowski, prec-nica, nt, ref11, webb-lan,
webb, webb-infante, webb-infante1, webb-infante2, xue1, xue2} and references
therein). For problems with nonlinear boundary conditions we refer the reader
to~\cite{amp, Cabada1, Cab-Ter, acfm-nonlin, dfdorjp, Goodrich3, Goodrich4,
Goodrich5, gi-caa, gipp-cant, gipp-nonlin, gipp-mmas, paola} and references therein.

In this paper we extend the results from \cite{bip, nica-EJDE, nica-func-w},
in order to deal with the nonlocal initial value problem for the first order
differential system
\begin{equation}
\left\{
\begin{array}
[c]{l}%
x^{\prime}\left(  t\right)  =f_{1}\left(  t,x\left(  t\right)  ,y(t)\right)
,\ \ \\
y^{\prime}\left(  t\right)  =f_{2}\left(  t,x\left(  t\right)  ,y(t)\right)
,\ \ \ \ \ \text{ on }\left[  0,1\right]  ,\\
x\left(  0\right)  =\alpha\lbrack x,y],\\
y\left(  0\right)  =\beta\lbrack x,y].
\end{array}
\right.  \label{1}%
\end{equation}
Here, \ $f_{1},f_{2}:[0,1]\times\mathbf{%
\mathbb{R}
}^{2}\rightarrow\mathbf{%
\mathbb{R}
}$ are continuous functions and $\alpha,\beta:(C[0,1])^{2}\rightarrow
\mathbb{R}
%
$ are nonlinear continuous functionals.

\begin{remark}
We can also consider the case when $f_{1},f_{2}$ are $L^{1}$-Carath\'{e}odory functions
and $\alpha,\beta:\left(  W^{1,1}(0,1)\right)  ^{2}\rightarrow
\mathbb{R}
%
$. In this case, the system (\ref{1}) can be defined a.e. on $\left[  0,1\right]
$ and the solutions can be sought in the Sobolev space $\left(  W^{1,1}%
(0,1)\right)  ^{2}.$
\end{remark}

Our approach is to rewrite the problem (\ref{1}) as a system of the form
\begin{align*}
x_{a}  &  =\left(  a+\int_{0}^{t}f_{1}\left(  s,x\left(  s\right)  ,y\left(
s\right)  \right)  ds,\ \alpha\left[  x,y\right]  \right)  ,\\
y_{b}  &  =\left(  b+\int_{0}^{t}f_{2}\left(  s,x\left(  s\right)  ,y\left(
s\right)  \right)  ds,\ \beta\left[  x,y\right]  \right)  ,
\end{align*}
where by $x_{a},y_{b}$ we mean the pairs $\left(  x,a\right)  ,\left(
y,b\right)  \in C\left[  0,1\right]  \times%
\mathbb{R}
.$

This, in turn, can be viewed as a fixed point problem in $\left(  C\left[
0,1\right]  \times%
\mathbb{R}
\right)  ^{2}$ for the completely continuous operator
\[
T=(T_{1},T_{2}):\left(  C\left[  0,1\right]  \times%
\mathbb{R}
\right)  ^{2}\rightarrow\left(  C\left[  0,1\right]  \times%
\mathbb{R}
\right)  ^{2},
\]
where $T_{1}$ and $T_{2}$\ are given by%
\[%
\begin{array}
[c]{l}%
T_{1}\left[  x_{a},y_{b}\right]  =\left(  a+\int_{0}^{t}f_{1}\left(
s,x\left(  s\right)  ,y(s)\right)  ds,\ \alpha\lbrack x,y]\right)  ,\\
T_{2}\left[  x_{a},y_{b}\right]  =\left(  b+\int_{0}^{t}f_{2}\left(
s,x\left(  s\right)  ,y(s)\right)  ds,\ \beta\lbrack x,y]\right)  .
\end{array}
\]

In what follows, we introduce some notations, definitions and basic results
which are used throughout this paper. Three different fixed point principles
are used in order to prove the existence of solutions for the problem
\eqref{1}, namely the fixed point principles of Perov, Schauder and
Leray-Schauder (see \cite{ref11, p1}). A technique that makes use of the
vector-valued metrics and convergent to zero matrices has an essential role in
all three cases. Therefore, we recall the fundamental results that are used in
the next sections (see \cite{ref12, op, ref11}).\medskip

Let $X$ be a nonempty set.

\begin{definition}
By a \emph{vector-valued metric} on $X$ we mean a mapping $d:X\times
X\rightarrow\mathbf{%
\mathbb{R}
}_{+}^{n}$ such that\medskip

(i) $d(u,v)\geq0$ for all $u,v\in X$ and if $d(u,v)=0$ then $u=v;$

(ii) $d(u,v)=d(v,u)$ for all $u,v\in X;$

(iii) $d(u,v)\leq d(u,w)+d(w,v)$ for all $u,v,w\in X.\medskip$
\end{definition}

\noindent Here, if $x,y\in\mathbf{%
\mathbb{R}
}^{n},$ $x=(x_{1},x_{2},...,x_{n}),$ $y=(y_{1},y_{2},...,y_{n}),$ by $x\leq y
$ we mean $x_{i}\leq y_{i}$ for $i=1,2,...,n.$ We call the pair $(X,d)$ a
\emph{generalised metric space}. For such a space convergence and completeness
are similar to those in usual metric spaces.\medskip

\begin{definition}
A square matrix $M$ with nonnegative elements is said to be \emph{convergent
to zero} if%
\[
M^{k}\rightarrow0\;\;\;\text{as\thinspace\ \ }k\rightarrow\infty.
\]

\end{definition}

The property of being convergent to zero is equivalent to each of the
following conditions from the characterisation lemma below (see \cite[pp 9,
10]{bp}, \cite{ref11}, \cite{p1}, \cite[pp 12, 88]{v}):

\begin{lemma}
Let $M$ be a square matrix of nonnegative numbers. The following statements
are equivalent:

(i) $M$ is a matrix convergent to zero;

(ii) $I-M$ is nonsingular and $(I-M)^{-1}=I+M+M^{2}+...$ (where $I$ stands for
the unit matrix of the same order as $M$);

(iii) the eigenvalues of $M$ are located inside the unit disc of the complex plane;

(iv) $I-M$ is nonsingular and $(I-M)^{-1}$ has nonnegative elements.\newline
\end{lemma}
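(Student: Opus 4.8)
The plan is to establish the four equivalences through a short cycle of implications, relying on the spectral radius $\rho(M)=\max\{|\lambda|:\lambda\text{ an eigenvalue of }M\}$ and on the Perron--Frobenius theory for nonnegative matrices. I would organise the argument as (i)$\Leftrightarrow$(iii), followed by the cycle (iii)$\Rightarrow$(ii)$\Rightarrow$(iv)$\Rightarrow$(iii); together these yield the full equivalence of all four statements.

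First I would prove (i)$\Leftrightarrow$(iii). For (i)$\Rightarrow$(iii), if $\lambda$ is an eigenvalue of $M$ with eigenvector $v\neq0$, then $M^{k}v=\lambda^{k}v$, and $M^{k}\to0$ forces $\lambda^{k}v\to0$, hence $|\lambda|<1$. For the converse, I would pass to the Jordan canonical form $M=PJP^{-1}$, so that $M^{k}=PJ^{k}P^{-1}$; each Jordan block attached to an eigenvalue $\lambda$ with $|\lambda|<1$ has entries that are polynomials in $k$ multiplied by powers $\lambda^{k-j}$, all of which tend to $0$, so $J^{k}\to0$ and therefore $M^{k}\to0$.

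Next, for (iii)$\Rightarrow$(ii), since $\rho(M)<1$ the value $1$ is not an eigenvalue of $M$, so $I-M$ is nonsingular; the telescoping identity $(I-M)(I+M+\cdots+M^{k})=I-M^{k+1}$ together with $M^{k+1}\to0$, which is already available through the equivalence (i)$\Leftrightarrow$(iii), shows that the Neumann series $I+M+M^{2}+\cdots$ converges to $(I-M)^{-1}$. Then (ii)$\Rightarrow$(iv) is immediate: every power $M^{k}$ has nonnegative entries because $M$ does, so the sum $(I-M)^{-1}=\sum_{k\geq0}M^{k}$ has nonnegative entries as well.

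The crucial and least routine step is (iv)$\Rightarrow$(iii), which is where the nonnegativity of $M$ is genuinely used. Here I would invoke the Perron--Frobenius theorem, guaranteeing that $\rho(M)$ is itself an eigenvalue of $M$ with an associated eigenvector $v\geq0$, $v\neq0$. Then $(I-M)v=(1-\rho(M))v$, and applying $(I-M)^{-1}$ gives $v=(1-\rho(M))(I-M)^{-1}v$. Nonsingularity of $I-M$ rules out $\rho(M)=1$; and if $\rho(M)>1$, the right-hand side would be a nonpositive vector, being a negative scalar times the nonnegative vector $(I-M)^{-1}v$, whereas the left-hand side $v$ is nonnegative and nonzero, a contradiction. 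Hence $\rho(M)<1$, closing the cycle. I expect this final implication to be the main obstacle, since it rests on the nontrivial Perron--Frobenius theory rather than on elementary manipulations.
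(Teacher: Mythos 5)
Your proof is correct. Note, however, that the paper does not prove this lemma at all: it is stated as a known characterisation and justified only by citations to Berman--Plemmons, Precup and Varga, so there is no ``paper proof'' to match; your argument is a self-contained substitute. Each step checks out: (i)$\Leftrightarrow$(iii) via eigenvectors and the Jordan form, (iii)$\Rightarrow$(ii) via the telescoping identity, (ii)$\Rightarrow$(iv) by nonnegativity of the powers $M^k$, and (iv)$\Rightarrow$(iii) by applying $(I-M)^{-1}$ to a nonnegative Perron eigenvector of $\rho(M)$. One remark on the step you yourself flag as the least routine: the appeal to Perron--Frobenius (in the form that $\rho(M)$ is an eigenvalue of a general nonnegative matrix with a nonnegative eigenvector, which for reducible $M$ already requires a limiting or fixed-point argument) can be avoided entirely. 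If $N:=(I-M)^{-1}\geq 0$, then $N=I+MN$, and iterating gives $N=(I+M+\cdots+M^{k})+M^{k+1}N\geq I+M+\cdots+M^{k}$ entrywise for every $k$; the partial sums are entrywise nondecreasing and bounded above by $N$, hence convergent, which forces $M^{k}\to 0$ and closes the cycle as (iv)$\Rightarrow$(i) with only elementary manipulations. This is essentially the route taken in the cited monographs and buys you independence from spectral theory; your version buys a slightly shorter write-up at the cost of a deeper imported theorem.
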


Note that, according to the equivalence of the statements (i) and (iv), a
matrix $M$ is convergent to zero if and only if the matrix $I-M$ is
\textit{inverse-positive}.

The following lemma is a consequence of the previous characterisations.

\begin{lemma}
Let $A$ be a matrix that is convergent to zero. Then for each matrix $B$ of
the same order whose elements are nonnegative and sufficiently small, the
matrix $A+B$ is also convergent to zero.
\end{lemma}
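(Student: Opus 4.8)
The plan is to reduce the statement to the spectral characterisation (iii) of the preceding lemma, namely that a nonnegative square matrix is convergent to zero precisely when its spectral radius is strictly less than one, and then to exploit the continuous dependence of the spectral radius on the entries of the matrix. Since $A$ is convergent to zero we have $\rho(A)<1$, where $\rho$ denotes the spectral radius, and the goal becomes to show that $\rho(A+B)<1$ for every nonnegative $B$ with sufficiently small entries. I would first note that adding a nonnegative matrix $B$ to the nonnegative matrix $A$ again yields a nonnegative matrix, so that both the definition of being convergent to zero and the characterisation lemma remain applicable to $A+B$.

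Next I would establish the continuity of $M\mapsto\rho(M)$ at $M=A$. The coefficients of the characteristic polynomial $\det(\lambda I-M)$ are polynomial, hence continuous, functions of the entries of $M$, and the roots of a monic polynomial depend continuously on its coefficients. Consequently the eigenvalues, and therefore their maximal modulus $\rho(M)$, vary continuously with $M$. Writing $\varepsilon:=1-\rho(A)>0$, continuity provides a $\delta>0$ such that $|\rho(A+B)-\rho(A)|<\varepsilon$ whenever every entry of $B$ is smaller than $\delta$; for such $B$ one gets $\rho(A+B)<\rho(A)+\varepsilon=1$, and invoking the equivalence of (i) and (iii) the matrix $A+B$ is convergent to zero.

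The only delicate point is the continuity of the roots of a polynomial with respect to its coefficients; once this standard fact is granted, the argument is immediate, so I expect that step to carry all the weight of the proof.

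Alternatively, one may avoid any appeal to root continuity and argue purely within the characterisations (ii) and (iv). Since $A$ is convergent to zero, $(I-A)^{-1}=I+A+A^{2}+\cdots$ exists and has nonnegative entries. Factor $I-(A+B)=(I-A)\bigl(I-(I-A)^{-1}B\bigr)$ and set $C:=(I-A)^{-1}B$, which is nonnegative as a product of nonnegative matrices. For $B$ small enough $C$ has small entries, so $\rho(C)<1$ and $(I-C)^{-1}=\sum_{k\ge 0}C^{k}$ converges with nonnegative entries. Then $I-(A+B)$ is nonsingular with $(I-(A+B))^{-1}=(I-C)^{-1}(I-A)^{-1}\ge 0$, and characterisation (iv) yields the claim. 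I would favour the first route as the cleaner write-up.
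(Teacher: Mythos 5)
Your proof is correct; the paper itself gives no argument for this lemma, stating only that it ``is a consequence of the previous characterisations,'' and your first route --- continuity of the spectral radius in the entries of the matrix combined with characterisation (iii) --- is precisely the standard way of making that remark precise. Your alternative argument via the factorisation $I-(A+B)=(I-A)\bigl(I-(I-A)^{-1}B\bigr)$ and characterisations (ii) and (iv) is also sound and has the small merit of avoiding the continuity of polynomial roots, but either route is fully in the spirit of what the paper intends.
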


\begin{definition}
Let $(X,d)$ be a generalized metric space. An operator $T:X\rightarrow X$ is
said to be \emph{contractive} (with respect to the vector-valued metric $d$ on
$X$) if there exists a convergent to zero (Lipschitz) matrix $M$ such that
\[
d(T(u),T(v))\leq Md(u,v)\;\,\,\mbox{ for all }u,v\in X.
\]

\end{definition}

\begin{theorem}
[Perov]Let $(X,d)$ be a complete generalized metric space and $T:X\rightarrow
X$ a contractive operator with Lipschitz matrix $M.$ Then $T$ has a unique
fixed point $u^{\ast}$ and for each $u_{0}\in X$ we have
\[
d(T^{k}(u_{0}),u^{\ast})\leq M^{k}(I-M)^{-1}d(u_{0},T(u_{0}))\,\mbox{ for
all }k\in\mathbb{N}.
\]

\end{theorem}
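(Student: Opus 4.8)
The plan is to adapt the classical Picard iteration argument underlying the Banach contraction principle, replacing scalar estimates by vector inequalities and exploiting the characterisation lemma for convergent to zero matrices. First I would fix an arbitrary $u_{0}\in X$ and consider the sequence of successive approximations $u_{k}:=T^{k}(u_{0})$. Applying the contraction condition repeatedly, I would establish that $d(u_{k+1},u_{k})\leq M^{k}d(u_{1},u_{0})$ for every $k$, where the powers of $M$ act by the matrix--vector product and the inequality is understood componentwise in $\mathbf{\mathbb{R}}_{+}^{n}$. This step uses only the nonnegativity of the entries of $M$, which guarantees that the order relation is preserved under multiplication by $M$.

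Next I would bound the distance between far-apart iterates. Using the triangle inequality and summing the previous estimate, for $m>k$ one obtains
\[
d(u_{m},u_{k})\leq\Big(\sum_{i=k}^{m-1}M^{i}\Big)d(u_{1},u_{0}).
\]
Since $M$ is convergent to zero, statement (ii) of the characterisation lemma ensures that the Neumann series $\sum_{i\geq0}M^{i}$ converges to $(I-M)^{-1}$; hence the partial sums on the right are dominated by $M^{k}(I-M)^{-1}d(u_{1},u_{0})$, and because $M^{k}\rightarrow0$ this tail tends to zero. Thus $(u_{k})$ is Cauchy, and by completeness of $(X,d)$ it converges to some $u^{\ast}\in X$. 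Passing to the limit $m\rightarrow\infty$ in the displayed bound yields precisely the asserted error estimate
\[
d(T^{k}(u_{0}),u^{\ast})\leq M^{k}(I-M)^{-1}d(u_{0},T(u_{0})).
\]

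To see that $u^{\ast}$ is a fixed point, I would estimate $d(u^{\ast},T(u^{\ast}))\leq d(u^{\ast},u_{k+1})+d(T(u_{k}),T(u^{\ast}))\leq d(u^{\ast},u_{k+1})+Md(u_{k},u^{\ast})$ and let $k\rightarrow\infty$, so that the right-hand side vanishes and $T(u^{\ast})=u^{\ast}$. For uniqueness, if $v^{\ast}$ is another fixed point then the contraction condition gives $d(u^{\ast},v^{\ast})\leq Md(u^{\ast},v^{\ast})$, that is $(I-M)d(u^{\ast},v^{\ast})\leq0$. The decisive point here---the step requiring the full strength of the hypothesis---is statement (iv) of the lemma: since $(I-M)^{-1}$ exists and has nonnegative entries, multiplying this inequality on the left by the inverse-positive matrix $(I-M)^{-1}$ preserves the order and yields $d(u^{\ast},v^{\ast})\leq0$, forcing $d(u^{\ast},v^{\ast})=0$ and hence $u^{\ast}=v^{\ast}$. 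The main obstacle throughout is order-theoretic rather than analytic: one must verify at each stage that the matrix operations respect the componentwise ordering, and it is exactly the two equivalent properties of convergence of the Neumann series and inverse-positivity of $I-M$ that make the scalar geometric-series reasoning go through in the vector-valued setting.
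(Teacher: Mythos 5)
The paper does not prove Perov's theorem: it is stated as a known fixed point principle and attributed to the references \cite{ref11, p1}, so there is no in-paper argument to compare yours against. Your proof is correct and is the standard vector-valued adaptation of the Picard--Banach iteration: the componentwise estimate $d(u_{k+1},u_k)\leq M^k d(u_1,u_0)$, the Neumann-series bound $\sum_{i=k}^{m-1}M^i\leq M^k(I-M)^{-1}$ from statement (ii) of the characterisation lemma, the Cauchy/completeness step, and the inverse-positivity argument for uniqueness are all sound, and each place where you invoke nonnegativity of $M$ or of $(I-M)^{-1}$ to preserve the componentwise order is exactly where such a justification is needed. The only cosmetic remark is that uniqueness also follows more cheaply by iterating $d(u^{\ast},v^{\ast})\leq M^k d(u^{\ast},v^{\ast})\rightarrow 0$, without appealing to statement (iv); but your route via inverse-positivity is equally valid.
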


\begin{theorem}
[Schauder]Let $X$ be a Banach space, $D\subset X$ a nonempty closed bounded
convex set and $T:D\rightarrow D$ a completely continuous operator (i.e., $T$
is continuous and $T(D)$ is relatively compact). Then $T$ has at least one
fixed point.
\end{theorem}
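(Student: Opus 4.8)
The plan is to reduce the statement to Brouwer's fixed point theorem in finite dimensions, approximating $T$ by maps with finite-dimensional range. First I would note that, since $D$ is closed and convex and $T(D)\subset D$, the relatively compact set $K:=\overline{T(D)}$ satisfies $K\subset D$. The central device is the Schauder projection. Fix $\varepsilon>0$; relative compactness of $T(D)$ yields a finite $\varepsilon$-net $\{y_{1},\dots,y_{m}\}\subset K\subset D$. Setting $\lambda_{i}(y)=\max\{0,\ \varepsilon-\|y-y_{i}\|\}$ and
\[
P_{\varepsilon}(y)=\frac{\sum_{i=1}^{m}\lambda_{i}(y)\,y_{i}}{\sum_{i=1}^{m}\lambda_{i}(y)},
\]
I obtain a continuous map $P_{\varepsilon}:K\rightarrow C_{\varepsilon}$, where $C_{\varepsilon}:=\mathrm{conv}\{y_{1},\dots,y_{m}\}$, with $\|P_{\varepsilon}(y)-y\|\le\varepsilon$ for every $y\in K$; the denominator never vanishes because the $y_{i}$ form an $\varepsilon$-net.

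Next I would apply Brouwer to the finite-dimensional reduction. As $D$ is convex and each $y_{i}\in D$, the hull $C_{\varepsilon}$ is a compact convex subset of the finite-dimensional subspace spanned by $y_{1},\dots,y_{m}$, and $C_{\varepsilon}\subset D$. Hence $T_{\varepsilon}:=P_{\varepsilon}\circ T$ maps $C_{\varepsilon}$ continuously into itself. Since a compact convex subset of a finite-dimensional normed space is homeomorphic to a closed ball (possibly of lower dimension), Brouwer's theorem provides $u_{\varepsilon}\in C_{\varepsilon}$ with $T_{\varepsilon}(u_{\varepsilon})=u_{\varepsilon}$, and the approximation estimate gives
\[
\|u_{\varepsilon}-T(u_{\varepsilon})\|=\|P_{\varepsilon}(T(u_{\varepsilon}))-T(u_{\varepsilon})\|\le\varepsilon.
\]

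Finally I would pass to the limit. Taking $\varepsilon=1/n$ produces a sequence $u_{n}\in D$ with $\|u_{n}-T(u_{n})\|\le 1/n$. Because $T(u_{n})\in K$ and $K$ is compact, a subsequence $T(u_{n_{k}})$ converges to some $v\in K\subset D$; the estimate then forces $u_{n_{k}}\to v$ as well. Continuity of $T$ yields $T(u_{n_{k}})\to T(v)$, and comparing the two limits gives $v=T(v)$, so $v$ is the desired fixed point in $D$.

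I expect the main obstacle to be the construction and verification of the Schauder projection: checking that $P_{\varepsilon}$ is well defined (nonvanishing denominator), continuous, takes values in the finite-dimensional hull $C_{\varepsilon}\subset D$, and stays within $\varepsilon$ of the identity on $K$. Once this finite-dimensional approximation is available, the appeal to Brouwer and the compactness argument are routine. One could instead reach Brouwer via Sperner's lemma or the Knaster--Kuratowski--Mazurkiewicz principle, but the projection route is the most direct from the hypotheses stated here.
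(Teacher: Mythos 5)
Your argument is correct: it is the classical proof of Schauder's theorem via the Schauder projection onto the convex hull of a finite $\varepsilon$-net, reduction to Brouwer's theorem on a finite-dimensional compact convex set, and a compactness passage to the limit using the approximate fixed points $\|u_{n}-T(u_{n})\|\le 1/n$. All the delicate points are handled: the net points lie in $K=\overline{T(D)}\subset D$ so that $C_{\varepsilon}\subset D$ by convexity, the denominator of $P_{\varepsilon}$ is positive on $K$ by the net property, and the limit $v$ of $T(u_{n_{k}})$ is identified as a fixed point by continuity. Note, however, that the paper offers no proof of this statement at all: it is quoted as a classical background result (with references to the fixed point theory literature) and is only \emph{used} later, in Section 3, to produce an invariant set $B=B_{1}\times B_{2}$ for the operator $T$. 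So there is no in-paper argument to compare against; your proof is the standard one found in the cited monographs.
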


\begin{theorem}
[Leray--Schauder]Let $(X,\left\vert .\right\vert _{X})$ be a Banach space,
$R>0$ and $T:\overline{B}_{X}(0;R)\rightarrow X$ a completely continuous
operator. If $\left\vert u\right\vert _{X}<R$ for every solution $u$ of the
equation $u=\lambda T(u)$ and any $\lambda\in(0,1),$ then $T$ has at least one
fixed point.
\end{theorem}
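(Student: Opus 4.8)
The plan is to reduce the statement to Schauder's theorem by composing $T$ with the radial retraction of the Banach space onto the closed ball $\overline{B}_{X}(0;R)$. First I would introduce the map $r:X\rightarrow\overline{B}_{X}(0;R)$ defined by $r(u)=u$ when $|u|_{X}\leq R$ and $r(u)=Ru/|u|_{X}$ when $|u|_{X}>R$. This nearest-point projection is continuous on all of $X$ and fixes the ball pointwise, mapping every exterior point radially to the sphere $|u|_{X}=R$. I would then form the operator $S:=r\circ T:\overline{B}_{X}(0;R)\rightarrow\overline{B}_{X}(0;R)$.

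The preliminary step is to check that Schauder's theorem applies to $S$ on the nonempty closed bounded convex set $\overline{B}_{X}(0;R)$. By construction $S$ takes the ball into itself, and $S$ is completely continuous: since $T$ is completely continuous the set $T(\overline{B}_{X}(0;R))$ is relatively compact, and its image under the continuous map $r$ is again relatively compact, while continuity of $S$ is immediate from continuity of both factors. Schauder's theorem thus furnishes a point $u^{\ast}\in\overline{B}_{X}(0;R)$ with $u^{\ast}=S(u^{\ast})=r(T(u^{\ast}))$.

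The crux, and the step I expect to be the main obstacle, is to upgrade $u^{\ast}$ from a fixed point of $S$ to a genuine fixed point of $T$, where the a priori bound hypothesis enters decisively. I would argue by cases on the size of $|T(u^{\ast})|_{X}$. If $|T(u^{\ast})|_{X}\leq R$, then $r$ acts as the identity on $T(u^{\ast})$, whence $u^{\ast}=T(u^{\ast})$ and the proof is complete. If instead $|T(u^{\ast})|_{X}>R$, then $u^{\ast}=r(T(u^{\ast}))=\lambda T(u^{\ast})$ with $\lambda:=R/|T(u^{\ast})|_{X}\in(0,1)$, so $u^{\ast}$ solves the equation $u=\lambda T(u)$ for some $\lambda\in(0,1)$; the hypothesis then forces $|u^{\ast}|_{X}<R$. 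This contradicts the fact that $|u^{\ast}|_{X}=\lambda|T(u^{\ast})|_{X}=R$, so the second case cannot occur. Consequently $|T(u^{\ast})|_{X}\leq R$ and $u^{\ast}$ is a fixed point of $T$, which proves the theorem.
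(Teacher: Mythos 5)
Your argument is correct. Note first that the paper itself does not prove this statement: the Leray--Schauder principle is quoted there as a known background result, with references to the literature, so there is no ``paper proof'' to match against. Your reduction to Schauder's theorem via the radial retraction $r(u)=u$ for $\left\vert u\right\vert _{X}\leq R$ and $r(u)=Ru/\left\vert u\right\vert _{X}$ otherwise is the standard route and every step checks out: $r$ is continuous and fixes the ball, so $S=r\circ T$ maps $\overline{B}_{X}(0;R)$ into itself; $S(\overline{B}_{X}(0;R))\subseteq r\bigl(\overline{T(\overline{B}_{X}(0;R))}\bigr)$ is contained in the continuous image of a compact set, hence relatively compact, so Schauder applies; and in the case $\left\vert T(u^{\ast})\right\vert _{X}>R$ the fixed point of $S$ satisfies $u^{\ast}=\lambda T(u^{\ast})$ with $\lambda=R/\left\vert T(u^{\ast})\right\vert _{X}\in(0,1)$ and $\left\vert u^{\ast}\right\vert _{X}=R$, which the a priori bound hypothesis excludes. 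The only point worth making explicit is the one you implicitly use: $u^{\ast}$ lies in $\overline{B}_{X}(0;R)$, so it is a legitimate solution of $u=\lambda T(u)$ to which the hypothesis applies. This is a complete and standard proof of the stated principle.
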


In this paper, by $\left\vert x\right\vert _{C}$, where $x\in C[0,1],$ we
mean
\[
\left\vert x\right\vert _{C}=\underset{t\in\lbrack0,1]}{\max}\left\vert
x(t)\right\vert .
\]
Also, throughout the paper we shall assume that $f_{1},f_{2}:[0,1]\times
\mathbf{%
\mathbb{R}
}^{2}\rightarrow\mathbf{%
\mathbb{R}
}$ are such that $\ f_{1}(.,x,y),f_{2}(.,x,y)$ are measurable for each
$\left(  x,y\right)  \in$\ $\mathbf{%
\mathbb{R}
}^{2}$ and $f_{1}(t,.,.),f_{2}(t,.,.)$ are continuous for almost all
$t\in\lbrack0,1].$

\section{Existence and Uniqueness of the Solution}

\setcounter{equation}{0} In this section we show that the existence of
solutions to the problem (\ref{1}) follows from Perov's fixed point theorem in
case that the nonlinearities $f_{1},f_{2}$ and the functionals $\alpha,\beta$
satisfy Lipschitz conditions of the type$:$%

\begin{equation}
\left\{
\begin{array}
[c]{l}%
\left\vert f_{1}(t,x,y)-f_{1}(t,\overline{x},\overline{y})\right\vert \leq
a_{1}\left\vert x-\overline{x}\right\vert +b_{1}\left\vert y-\overline
{y}\right\vert \\
\left\vert f_{2}(t,x,y)-f_{2}(t,\overline{x},\overline{y})\right\vert \leq
a_{2}\left\vert x-\overline{x}\right\vert +b_{2}\left\vert y-\overline
{y}\right\vert ,
\end{array}
\right.  \label{200}%
\end{equation}
for all $x,y,\overline{x},\overline{y}\in%
\mathbb{R}
,$ and%
\begin{equation}
\left\{
\begin{array}
[c]{c}%
\left\vert \alpha\left[  x,y\right]  -\alpha\left[  \overline{x},\overline
{y}\right]  \right\vert \leq A_{1}\left\vert x-\overline{x}\right\vert
_{C}+B_{1}\left\vert y-\overline{y}\right\vert _{C}\\
\left\vert \beta\left[  x,y\right]  -\beta\left[  \overline{x},\overline
{y}\right]  \right\vert \leq A_{2}\left\vert x-\overline{x}\right\vert
_{C}+B_{2}\left\vert y-\overline{y}\right\vert _{C},
\end{array}
\right.  \label{201}%
\end{equation}
for all $x,y,\overline{x},\overline{y}\in$\ $C[0,1].\medskip$

For a given number $\theta>0,$ denote%
\[%
\begin{array}
[c]{ll}%
m_{11}\left(  \theta\right)  =\max\left\{  \frac{1}{\theta},a_{1}+\theta
A_{1}\right\}  & m_{12}\left(  \theta\right)  =b_{1}+\theta B_{1}\\
m_{21}\left(  \theta\right)  =a_{2}+\theta A_{2} & m_{22}\left(
\theta\right)  =\max\left\{  \frac{1}{\theta},b_{2}+\theta B_{2}\right\}  .
\end{array}
\]

\begin{theorem}
Assume that $f_{1},f_{2}$ satisfy the Lipschitz conditions \emph{(\ref{200})}
and $\alpha,\beta$ satisfy conditions\emph{\ (\ref{201}). }In addition assume
that for some $\theta>0,$ the matrix
\begin{equation}
M_{\theta}=\left[
\begin{array}
[c]{cc}%
m_{11}\left(  \theta\right)  & m_{12}\left(  \theta\right) \\
m_{21}\left(  \theta\right)  & m_{22}\left(  \theta\right)
\end{array}
\right]  \label{m}%
\end{equation}
is convergent to zero. Then the problem \emph{(\ref{1})} has a unique solution.
\end{theorem}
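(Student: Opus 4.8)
The plan is to apply Perov's fixed point theorem to the operator $T=(T_{1},T_{2})$ on the product space $\left(C[0,1]\times\mathbb{R}\right)^{2}$, so the core task is to equip this space with a suitable vector-valued metric and then verify that $T$ is contractive with Lipschitz matrix $M_{\theta}$. The natural choice is a Bielecki-type weighted norm on each copy of $C[0,1]$: for the parameter $\theta>0$ appearing in the statement, I would set $|x|_{\theta}=\max_{t\in[0,1]}\left(|x(t)|e^{-\theta t}\right)$. The crucial reason for this weighting is that the integral operator $x\mapsto\int_{0}^{t}f(s,\cdot)\,ds$ only contracts in such an exponentially weighted norm; the factor $\tfrac{1}{\theta}$ that appears on the diagonal of $M_{\theta}$ is exactly what this weighting produces. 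On the pair $(x,a)\in C[0,1]\times\mathbb{R}$ I would combine these into a single scalar quantity, for instance $\|x_{a}\|=\max\{|x|_{\theta},\theta|a|\}$, and then define the vector-valued metric on the product by $d\left((x_{a},y_{b}),(\overline{x}_{\overline{a}},\overline{y}_{\overline{b}})\right)=\left(\|x_{a}-\overline{x}_{\overline{a}}\|,\;\|y_{b}-\overline{y}_{\overline{b}}\|\right)\in\mathbb{R}_{+}^{2}$.

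The main estimate is the contraction inequality $d(T(u),T(v))\leq M_{\theta}\,d(u,v)$. I would prove this componentwise. For the first component $T_{1}$, its two coordinates are the integral expression and the functional $\alpha[x,y]$. For the integral coordinate I would bound $\left|\int_{0}^{t}\left(f_{1}(s,x,y)-f_{1}(s,\overline{x},\overline{y})\right)ds\right|$ using the Lipschitz condition (\ref{200}), then factor out $e^{\theta t}$ and use $\int_{0}^{t}e^{\theta s}\,ds\leq\tfrac{1}{\theta}e^{\theta t}$ to obtain a bound of the form $\tfrac{1}{\theta}\left(a_{1}|x-\overline{x}|_{\theta}+b_{1}|y-\overline{y}|_{\theta}\right)e^{\theta t}$; after including the initial value $a$ this yields the term $\tfrac{1}{\theta}$ under the maximum defining $m_{11}$. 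For the $\alpha$-coordinate I would apply (\ref{201}) together with the elementary comparison $|x-\overline{x}|_{C}\leq e^{\theta}|x-\overline{x}|_{\theta}$ (or more carefully track the weight) so that, after multiplying by $\theta$ as dictated by the norm $\|\cdot\|$, the coefficients become $a_{1}+\theta A_{1}$ and $b_{1}+\theta B_{1}$. Taking the maximum over the two coordinates produces precisely the first row $(m_{11}(\theta),m_{12}(\theta))$ of $M_{\theta}$; the symmetric computation for $T_{2}$ gives the second row.

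Once the contraction estimate is established with the matrix $M_{\theta}$, and since $M_{\theta}$ is assumed convergent to zero, Perov's theorem applies directly: the completeness of $\left(C[0,1]\times\mathbb{R}\right)^{2}$ under the weighted metric (inherited from completeness of $C[0,1]$ and $\mathbb{R}$) guarantees a unique fixed point $u^{\ast}=(x_{a}^{\ast},y_{b}^{\ast})$ of $T$. I would then unwind the reformulation to conclude that the corresponding pair $(x,y)$ solves (\ref{1}) uniquely, noting that a fixed point forces $a=\alpha[x,y]$, $b=\beta[x,y]$ and $x(t)=a+\int_{0}^{t}f_{1}$, $y(t)=b+\int_{0}^{t}f_{2}$, which by differentiation recovers the differential system and the nonlocal initial conditions.

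The step I expect to be the main obstacle is the careful bookkeeping of the weighted norms so that the constants line up exactly with the entries of $M_{\theta}$ as defined in the statement. In particular, one must choose the weighting and the combination $\|x_{a}\|=\max\{|x|_{\theta},\theta|a|\}$ so that the $\tfrac{1}{\theta}$ terms land on the diagonal while the functional contributions produce $\theta A_{i}$ and $\theta B_{i}$; getting the comparison between $|\cdot|_{C}$ and $|\cdot|_{\theta}$ to cooperate with these factors, rather than merely yielding some convergent-to-zero matrix, is the delicate part. Everything else is a routine verification of completeness and a direct invocation of Perov's theorem.
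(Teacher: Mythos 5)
Your overall strategy --- recasting (\ref{1}) as a fixed point problem for $T=(T_{1},T_{2})$ on $\left(C[0,1]\times\mathbb{R}\right)^{2}$ and applying Perov's theorem with Lipschitz matrix $M_{\theta}$ --- is exactly the paper's, but your choice of norm does not produce the matrix $M_{\theta}$ of the statement, and the difficulty you flag at the end is a genuine obstruction rather than bookkeeping. The paper uses the \emph{unweighted} sup norm, setting $\left\vert x_{a}\right\vert=\left\vert x\right\vert_{C}+\theta\left\vert a\right\vert$; the factor $\tfrac{1}{\theta}$ in $m_{11}(\theta)=\max\{\tfrac{1}{\theta},a_{1}+\theta A_{1}\}$ comes from writing $\left\vert a-\overline{a}\right\vert=\tfrac{1}{\theta}\cdot\theta\left\vert a-\overline{a}\right\vert$ and absorbing it into the $\mathbb{R}$-component of the norm, not from a Bielecki weight. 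No gain from the integral is needed: since the interval is $[0,1]$, one simply bounds $\bigl\vert\int_{0}^{t}\vert f_{1}-\overline{f}_{1}\vert\,ds\bigr\vert_{C}\leq a_{1}\left\vert x-\overline{x}\right\vert_{C}+b_{1}\left\vert y-\overline{y}\right\vert_{C}$, and these coefficients appear directly in $a_{1}+\theta A_{1}$ and $b_{1}+\theta B_{1}$. Because the norm on $C[0,1]\times\mathbb{R}$ is the \emph{sum} of the two pieces, the terms $(a_{1}+\theta A_{1})\left\vert x-\overline{x}\right\vert_{C}$ and $\tfrac{1}{\theta}\cdot\theta\left\vert a-\overline{a}\right\vert$ combine into $\max\{\tfrac{1}{\theta},a_{1}+\theta A_{1}\}\left\vert x_{a}-\overline{x}_{\overline{a}}\right\vert$, which is exactly $m_{11}(\theta)$.

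The Bielecki norm $\left\vert x\right\vert_{\theta}=\max_{t}\left(\left\vert x(t)\right\vert e^{-\theta t}\right)$ cannot be made to cooperate with the nonlocal functionals. The estimate for the $\alpha$-coordinate forces you through $\left\vert x-\overline{x}\right\vert_{C}\leq e^{\theta}\left\vert x-\overline{x}\right\vert_{\theta}$, and the factor $e^{\theta}$ is unavoidable because $\alpha$ may depend on values of $x$ near $t=1$, where the weight is smallest; it turns the coefficient $\theta A_{1}$ into $\theta A_{1}e^{\theta}$. The resulting Lipschitz matrix is strictly larger than $M_{\theta}$ and need not be convergent to zero when $M_{\theta}$ is: in Example 4.1 of the paper, $\theta=2$ and $A_{1}=1/8$, so $\theta A_{1}=1/4$ while $\theta A_{1}e^{\theta}\approx 1.85>1$, and a nonnegative matrix with a diagonal entry at least $1$ cannot converge to zero. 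A smaller misalignment: your combination $\max\{\left\vert x\right\vert_{\theta},\theta\left\vert a\right\vert\}$ also spoils the constants, since with a max-norm the two contributions to the first row add up to roughly $a_{1}+\theta A_{1}+\tfrac{1}{\theta}$ instead of the maximum; the sum-norm is what makes the $\max$ in $m_{11}$, $m_{22}$ appear. If you drop the exponential weight and take $\left\vert x_{a}\right\vert=\left\vert x\right\vert_{C}+\theta\left\vert a\right\vert$, the rest of your argument (the row-by-row contraction estimate, Perov's theorem, and unwinding the fixed point into a solution of (\ref{1})) goes through as you describe.
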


\begin{proof}
We shall apply Perov's fixed point theorem in $\left(  C\left[  0,1\right]
\times%
\mathbb{R}
\right)  ^{2}$ endowed with the vector-valued norm $\left\Vert .\right\Vert
_{\left(  C\left[  0,1\right]  \times%
\mathbb{R}
\right)  ^{2}},$
\[
\left\Vert u\right\Vert _{\left(  C\left[  0,1\right]  \times%
\mathbb{R}
\right)  ^{2}}=\left[
\begin{array}
[c]{c}%
\left\vert x_{a}\right\vert \\
\left\vert y_{b}\right\vert
\end{array}
\right]  ,
\]
for $u=\left(  x_{a},y_{b}\right)  .$ Here%
\[
\left\vert x_{a}\right\vert =\left\vert (x,a)\right\vert =\left\vert
x\right\vert _{C}+\theta\left\vert a\right\vert,
\]
which represents a norm on $C\left[  0,1\right]  \times%
\mathbb{R}
.$ We have to prove that $T$ is contractive with respect to the convergent to
zero matrix $M_{\theta},$ more exactly that%
\[
\left\Vert T(u)-T(\overline{u})\right\Vert _{\left(  C\left[  0,1\right]
\times%
\mathbb{R}
\right)  ^{2}}\leq M_{\theta}\left\Vert u-\overline{u}\right\Vert _{\left(
C\left[  0,1\right]  \times%
\mathbb{R}
\right)  ^{2}},
\]
for all $u=(x_{a},y_{b}),\overline{u}=(\overline{x}_{\overline{a}}%
,\overline{y}_{\overline{b}})\in\left(  C\left[  0,1\right]  \times%
\mathbb{R}
\right)  ^{2}.$ Indeed, we have%
\begin{align}
&  \left\vert T_{1}\left[  x_{a},y_{b}\right]  -T_{1}\left[  \overline
{x}_{\overline{a}},\overline{y}_{\overline{b}}\right]  \right\vert \nonumber\\
&  \leq\left\vert \int_{0}^{t}\left\vert f_{1}\left(  s,x\left(  s\right)
,y(s)\right)  -f_{1}\left(  s,\overline{x}\left(  s\right)  ,\overline
{y}(s)\right)  \right\vert ds\right\vert _{C}+\left\vert a-\overline
{a}\right\vert +\theta\left\vert \alpha\lbrack x,y]-\alpha\lbrack\overline
{x},\overline{y}]\right\vert \nonumber\\
&  \leq\left\vert a_{1}\int_{0}^{t}\left\vert x(s)-\overline{x}(s)\right\vert
ds+b_{1}\int_{0}^{t}\left\vert y(s)-\overline{y}(s)\right\vert \right\vert
_{C}+\theta A_{1}\left\vert x-\overline{x}\right\vert _{C}+\theta
B_{1}\left\vert y-\overline{y}\right\vert _{C}+\left\vert a-\overline
{a}\right\vert \nonumber\\
&  \leq\left(  a_{1}+\theta A_{1}\right)  \left\vert x-\overline{x}\right\vert
_{C}+\left(  b_{1}+\theta B_{1}\right)  \left\vert y-\overline{y}\right\vert
_{C}+\frac{1}{\theta}\cdot\theta\left\vert a-\overline{a}\right\vert
\nonumber\\
&  \leq\max\left\{  \frac{1}{\theta},a_{1}+\theta A_{1}\right\}  \left\vert
x_{a}-\overline{x}_{\overline{a}}\right\vert +\left(  b_{1}+\theta
B_{1}\right)  \left\vert y_{b}-\overline{y}_{\overline{b}}\right\vert
\nonumber\\
&  =m_{11}\left(  \theta\right)  \left\vert x_{a}-\overline{x}_{\overline{a}%
}\right\vert +m_{12}\left(  \theta\right)  \left\vert y_{b}-\overline
{y}_{\overline{b}}\right\vert . \label{T1_eval}%
\end{align}
Similarly, we have%
\begin{align}
&  \left\vert T_{2}\left[  x_{a},y_{b}\right]  -T_{2}\left[  \overline
{x}_{\overline{a}},\overline{y}_{\overline{b}}\right]  \right\vert \nonumber\\
&  \leq\left(  a_{2}+\theta A_{2}\right)  \left\vert x_{a}-\overline
{x}_{\overline{a}}\right\vert +\max\left\{  \frac{1}{\theta},b_{2}+\theta
B_{2}\right\}  \left\vert y_{b}-\overline{y}_{\overline{b}}\right\vert
\nonumber\\
&  =m_{21}\left(  \theta\right)  \left\vert x_{a}-\overline{x}_{\overline{a}%
}\right\vert +m_{22}\left(  \theta\right)  \left\vert y_{b}-\overline
{y}_{\overline{b}}\right\vert . \label{T2_eval}%
\end{align}
Now, both inequalities (\ref{T1_eval}), (\ref{T2_eval}) can be put together
and be rewritten equivalently as%
\[
\left[
\begin{array}
[c]{c}%
\left\vert T_{1}\left[  x_{a},y_{b}\right]  -T_{1}\left[  \overline
{x}_{\overline{a}},\overline{y}_{\overline{b}}\right]  \right\vert \\
\left\vert T_{2}\left[  x_{a},y_{b}\right]  -T_{2}\left[  \overline
{x}_{\overline{a}},\overline{y}_{\overline{b}}\right]  \right\vert
\end{array}
\right]  \leq M_{\theta}\left[
\begin{array}
[c]{c}%
\left\vert x_{a}-\overline{x}_{\overline{a}}\right\vert \\
\left\vert y_{b}-\overline{y}_{\overline{b}}\right\vert
\end{array}
\right]
\]
or using the vector-valued norm%
\[
\left\Vert T\left(  u\right)  -T\left(  \overline{u}\right)  \right\Vert
_{\left(  C\left[  0,1\right]  \times%
\mathbb{R}
\right)  ^{2}}\leq M_{\theta}\left\Vert u-\overline{u}\right\Vert _{\left(
C\left[  0,1\right]  \times%
\mathbb{R}
\right)  ^{2}},
\]
where $M_{\theta}$ is given by (\ref{m}) and assumed to be convergent to
zero.$~$The result follows now from Perov's fixed point theorem.
\end{proof}

\section{Existence of at least one solution}
\setcounter{equation}{0} In the beginning of this section, we give an
application of Schauder's fixed point theorem. More precisely, we show that the
existence of solutions to the problem (\ref{1}) follows from Schauder's fixed
point theorem in case that $f_{1},f_{2}$ satisfy some relaxed growth condition
of the type:%

\begin{equation}
\left\{
\begin{array}
[c]{l}%
\left\vert f_{1}(t,x,y)\right\vert \leq a_{1}\left\vert x\right\vert
+b_{1}\left\vert y\right\vert +c_{1},\\
\left\vert f_{2}(t,x,y)\right\vert \leq a_{2}\left\vert x\right\vert
+b_{2}\left\vert y\right\vert +c_{2},
\end{array}
\right.  \label{300_2}%
\end{equation}
for all $x,y,\overline{x},\overline{y}\in%
\mathbb{R}
,$ and%
\begin{equation}
\left\{
\begin{array}
[c]{l}%
\left\vert \alpha\left[  x,y\right]  \right\vert \leq A_{1}\left\vert
x\right\vert _{C}+B_{1}\left\vert y\right\vert _{C}+C_{1},\\
\left\vert \beta\left[  x,y\right]  \right\vert \leq A_{2}\left\vert
x\right\vert _{C}+B_{2}\left\vert y\right\vert _{C}+C_{2},
\end{array}
\right.  \label{301_2}%
\end{equation}
for all $x,y,\overline{x},\overline{y}\in$\ $C[0,1].$\medskip

\begin{theorem}
If the conditions \emph{(\ref{300_2}), (\ref{301_2})} hold and the matrix
\emph{(\ref{m}) }is convergent to zero for some $\theta>0,$ then the problem
\emph{(\ref{1})} has at least one solution.
\end{theorem}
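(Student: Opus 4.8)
The plan is to apply Schauder's fixed point theorem to the completely continuous operator $T=(T_{1},T_{2})$ on $\left(C[0,1]\times\mathbb{R}\right)^{2}$, equipped with the same $\theta$-dependent vector-valued norm used in the previous section, where $\left\vert x_{a}\right\vert =\left\vert x\right\vert _{C}+\theta\left\vert a\right\vert$. Complete continuity of $T$ has already been recorded, so the only substantive task is to exhibit a nonempty, closed, bounded, convex set $D$ with $T(D)\subseteq D$.

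First I would estimate the two components of $T$ from the growth conditions (\ref{300_2}) and (\ref{301_2}), in direct parallel with the Lipschitz estimates (\ref{T1_eval})--(\ref{T2_eval}). For $T_{1}$ one bounds $\left\vert a+\int_{0}^{t}f_{1}\,ds\right\vert _{C}\le\left\vert a\right\vert +a_{1}\left\vert x\right\vert _{C}+b_{1}\left\vert y\right\vert _{C}+c_{1}$ and $\theta\left\vert \alpha[x,y]\right\vert \le\theta A_{1}\left\vert x\right\vert _{C}+\theta B_{1}\left\vert y\right\vert _{C}+\theta C_{1}$; grouping the $x$-terms as $(a_{1}+\theta A_{1})\left\vert x\right\vert _{C}+\frac{1}{\theta}(\theta\left\vert a\right\vert )\le\max\{\frac{1}{\theta},a_{1}+\theta A_{1}\}(\left\vert x\right\vert _{C}+\theta\left\vert a\right\vert )$ reproduces the coefficient $m_{11}(\theta)$, while the $y$-terms yield $m_{12}(\theta)\left\vert y_{b}\right\vert$. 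The analogous computation for $T_{2}$ produces $m_{21}(\theta)$ and $m_{22}(\theta)$. In vector form this reads
\[
\left\Vert T(u)\right\Vert _{\left(C[0,1]\times\mathbb{R}\right)^{2}}\le M_{\theta}\left\Vert u\right\Vert _{\left(C[0,1]\times\mathbb{R}\right)^{2}}+c,\qquad c=\left[\begin{array}{c} c_{1}+\theta C_{1}\\ c_{2}+\theta C_{2}\end{array}\right],
\]
with precisely the matrix $M_{\theta}$ of (\ref{m}).

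Next I would invoke the hypothesis that $M_{\theta}$ is convergent to zero. By statement (iv) of the characterisation lemma, $I-M_{\theta}$ is nonsingular and $(I-M_{\theta})^{-1}$ has nonnegative entries, so the vector $R=(I-M_{\theta})^{-1}c$ is well defined and nonnegative, and it satisfies $M_{\theta}R+c=R$. Taking $D=\{u=(x_{a},y_{b}):\left\vert x_{a}\right\vert \le R_{1},\ \left\vert y_{b}\right\vert \le R_{2}\}$, a nonempty, closed, bounded, convex subset of $\left(C[0,1]\times\mathbb{R}\right)^{2}$ (being a product of two closed balls in $C[0,1]\times\mathbb{R}$), the componentwise estimate together with $M_{\theta}R+c=R$ gives $T(D)\subseteq D$. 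Schauder's theorem then furnishes a fixed point of $T$ in $D$, equivalently a solution of (\ref{1}).

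I expect the main obstacle to be the norm estimate rather than the abstract part: one must carry out the bound so that exactly the same matrix $M_{\theta}$ reappears, the decisive point being the regrouping $(a_{1}+\theta A_{1})\left\vert x\right\vert _{C}+\frac{1}{\theta}\cdot\theta\left\vert a\right\vert \le m_{11}(\theta)\left\vert x_{a}\right\vert$ (and its $T_{2}$-analogue), which is what forces the $\max\{1/\theta,\cdot\}$ into the diagonal entries. Once $\left\Vert T(u)\right\Vert \le M_{\theta}\left\Vert u\right\Vert +c$ is secured, the choice $R=(I-M_{\theta})^{-1}c\ge 0$ and the invariance $T(D)\subseteq D$ follow mechanically from the convergent-to-zero property.
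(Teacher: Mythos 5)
Your proposal is correct and follows essentially the same route as the paper: the same $\theta$-weighted norm, the same componentwise estimate $\left\Vert T(u)\right\Vert \leq M_{\theta}\left\Vert u\right\Vert +c$ with $c=(c_{1}+\theta C_{1},\,c_{2}+\theta C_{2})^{T}$, and the same choice of an invariant product of balls via $(I-M_{\theta})^{-1}c\geq 0$ before applying Schauder's theorem. The only cosmetic difference is that the paper allows any $R\geq (I-M_{\theta})^{-1}c$ while you take equality, which changes nothing.
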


\begin{proof}
In order to apply Schauder's fixed point theorem, we look for a nonempty,
bounded, closed and convex subset $B$ of $\left(  C\left[  0,1\right]  \times%
\mathbb{R}
\right)  ^{2}$ so that $T(B)\subset B.$ Let $x_{a},y_{b}$ be any elements of
$C\left[  0,1\right]  \times%
\mathbb{R}
.$\newline Then, using the same norm on $C\left[  0,1\right]  \times%
\mathbb{R}
$ as in the proof of the previous theorem, we obtain
\begin{align}
&  \left\vert T_{1}\left[  x_{a},y_{b}\right]  \right\vert =\left\vert
a+\int_{0}^{t}f_{1}\left(  s,x\left(  s\right)  ,y(s)\right)  ds\right\vert
_{C}+\theta\left\vert \alpha\lbrack x,y]\right\vert \label{T1_eval2}\\
&  \leq\left\vert a\right\vert +\left\vert \int_{0}^{t}\left(  a_{1}\left\vert
x(s)\right\vert +b_{1}\left\vert y(s)\right\vert +c_{1}\right)  ds\right\vert
_{C}+\theta A_{1}\left\vert x\right\vert _{C}+\theta B_{1}\left\vert
y\right\vert _{C}+\theta C_{1}\nonumber\\
&  \leq a_{1}\left\vert x\right\vert _{C}+b_{1}\left\vert y\right\vert
_{C}+c_{1}+\theta A_{1}\left\vert x\right\vert _{C}+\theta B_{1}\left\vert
y\right\vert _{C}+\theta C_{1}+\left\vert a\right\vert \nonumber\\
&  =\left(  a_{1}+\theta A_{1}\right)  \left\vert x\right\vert _{C}+\left(
b_{1}+\theta B_{1}\right)  \left\vert y\right\vert _{C}+\frac{1}{\theta}%
\cdot\theta\left\vert a\right\vert +c_{1}+\theta C_{1}\nonumber\\
&  \leq\max\left\{  \frac{1}{\theta},a_{1}+\theta A_{1}\right\}  \left\vert
x_{a}\right\vert +\left(  b_{1}+\theta B_{1}\right)  \left\vert y_{b}%
\right\vert +c_{0}\nonumber\\
&  =m_{11}\left(  \theta\right)  \left\vert x_{a}\right\vert +m_{12}\left(
\theta\right)  \left\vert y_{b}\right\vert +c_{0},\nonumber
\end{align}
where $c_{0}:=c_{1}+\theta C_{1}.$ Similarly%
\begin{align}
\left\vert T_{2}\left[  x_{a},y_{b}\right]  \right\vert  &  \leq\left(
a_{2}+\theta A_{2}\right)  \left\vert x_{a}\right\vert +\max\left\{  \frac
{1}{\theta},b_{2}+\theta B_{2}\right\}  \left\vert y_{b}\right\vert
+C_{0}\label{T2_eval2}\\
&  =m_{21}\left(  \theta\right)  \left\vert x_{a}\right\vert +m_{22}\left(
\theta\right)  \left\vert y_{b}\right\vert +C_{0},\nonumber
\end{align}
where $C_{0}:=c_{2}+\theta C_{2}.$ Now, from (\ref{T1_eval2}), (\ref{T2_eval2}%
) we have%
\[
\left[
\begin{array}
[c]{c}%
\left\vert T_{1}\left[  x_{a},y_{b}\right]  \right\vert \\
\left\vert T_{2}\left[  x_{a},y_{b}\right]  \right\vert
\end{array}
\right]  \leq M_{\theta}\left[
\begin{array}
[c]{c}%
\left\vert x_{a}\right\vert \\
\left\vert y_{b}\right\vert
\end{array}
\right]  +\left[
\begin{array}
[c]{c}%
c_{0}\\
C_{0}%
\end{array}
\right]  ,
\]
where $M_{\theta}$ is given by (\ref{m}) and is assumed to be convergent to
zero. Next we look for two positive numbers $R_{1},R_{2}$ such that if
$\left\vert x_{a}\right\vert \leq R_{1}$ and $\left\vert y_{b}\right\vert \leq
R_{2},$ then $\left\vert T_{1}\left[  x_{a},y_{b}\right]  \right\vert \leq
R_{1},$ $\left\vert T_{2}\left[  x_{a},y_{b}\right]  \right\vert \leq R_{2}.$
To this end it is sufficient that
\[
M_{\theta}\left[
\begin{array}
[c]{c}%
R_{1}\\
R_{2}%
\end{array}
\right]  +\left[
\begin{array}
[c]{c}%
c_{0}\\
C_{0}%
\end{array}
\right]  \leq\left[
\begin{array}
[c]{c}%
R_{1}\\
R_{2}%
\end{array}
\right]  ,
\]
whence%
\[
\left[
\begin{array}
[c]{c}%
R_{1}\\
R_{2}%
\end{array}
\right]  \geq\left(  I-M_{\theta}\right)  ^{-1}\left[
\begin{array}
[c]{c}%
c_{0}\\
C_{0}%
\end{array}
\right]  .
\]
Notice that $I-M_{\theta}$ is invertible and its inverse $\left(  I-M_{\theta
}\right)  ^{-1}$ has nonnegative elements since $M_{\theta}$ is convergent to
zero. Thus, if $B=B_{1}\times B_{2},$ where
\[
B_{1}=\left\{  x_{a}\in C[0,1]\times%
\mathbb{R}
:\left\vert x_{a}\right\vert \leq R_{1}\right\}  \text{ and }B_{2}=\left\{
y_{b}\in C[0,1]\times%
\mathbb{R}
:\left\vert y_{b}\right\vert \leq R_{2}\right\}  ,
\]
then $T(B)\subset B$ and Schauder's fixed point theorem can be applied.
\end{proof}

In what follows, we give an application of the Leray-Schauder Principle and we
assume that the nonlinearlities $f_{1},f_{2}$ and also the functionals
$\alpha,\beta$ satisfy more general growth conditions, namely:%

\begin{equation}
\left\{
\begin{array}
[c]{l}%
\left\vert f_{1}(t,x,y)\right\vert \leq\omega_{1}(t,\left\vert x\right\vert
,\left\vert y\right\vert ),\\
\left\vert f_{2}(t,x,y)\right\vert \leq\omega_{2}(t,\left\vert x\right\vert
,\left\vert y\right\vert ),
\end{array}
\right.  \label{ls1}%
\end{equation}
for all $x,y\in%
\mathbb{R}
,$
\begin{equation}
\left\{
\begin{array}
[c]{l}%
\left\vert \alpha\left[  x,y\right]  \right\vert \leq\omega_{3}(\left\vert
x\right\vert _{C},\left\vert y\right\vert _{C}),\\
\left\vert \beta\left[  x,y\right]  \right\vert \leq\omega_{4}(\left\vert
x\right\vert _{C},\left\vert y\right\vert _{C}),
\end{array}
\right.  \label{ls2}%
\end{equation}
for all $x,y\in C[0,1].$ Here $\omega_{1},\omega_{2}$ are $L^{1}%
$-Carath\'{e}odory functions on $[0,1]\times\mathbf{%
\mathbb{R}
}_{+}^{2},~$nondecreasing in their second and third arguments, and $\omega
_{3},\omega_{4}$ are continuous functions on $\mathbf{%
\mathbb{R}
}_{+}^{2},$ nondecreasing in both variables.

\begin{theorem}
Assume that the conditions \emph{(\ref{ls1}), (\ref{ls2})} hold. In addition
assume that there exists $R_{0}=\left(  R_{0}^{1},R_{0}^{2}\right)  \in\left(
0,\infty\right)  ^{2}$ such that for $\rho=\left(  \rho_{1},\rho_{2}\right)
\in\left(  0,\infty\right)  ^{2}$%
\begin{equation}
\left\{
\begin{array}
[c]{c}%
\int_{0}^{1}\omega_{1}(s,\rho_{1},\rho_{2})ds+\omega_{3}(\rho_{1},\rho
_{2})\geq\rho_{1}\\
\int_{0}^{1}\omega_{2}(s,\rho_{1},\rho_{2})ds+\omega_{4}(\rho_{1},\rho
_{2})\geq\rho_{2}%
\end{array}
\right.  \ \ \text{implies\ \ }\rho\leq R_{0}. \label{410_2}%
\end{equation}
Then the problem \emph{(\ref{1})} has at least one solution.
\end{theorem}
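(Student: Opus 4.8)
The plan is to invoke the Leray--Schauder principle for the completely continuous operator $T=(T_1,T_2)$, now working with a single \emph{scalar} norm on $\left(C[0,1]\times\mathbb{R}\right)^2$, for instance
\[
|u|_*=|x|_C+|a|+|y|_C+|b|,\qquad u=\big((x,a),(y,b)\big).
\]
Since $T$ is completely continuous, it suffices to produce a radius $R>0$ such that every solution $u$ of $u=\lambda T(u)$ with $\lambda\in(0,1)$ satisfies $|u|_*<R$; the existence of a fixed point, and hence of a solution of \eqref{1}, then follows at once. The whole difficulty is thus concentrated in obtaining an a priori bound that is uniform in $\lambda$ and in the particular solution.

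First I would fix such a solution $u$ and write out the four scalar relations encoded in $u=\lambda T(u)$, namely
\[
x(t)=\lambda\Big(a+\int_0^t f_1(s,x(s),y(s))\,ds\Big),\qquad a=\lambda\,\alpha[x,y],
\]
together with the analogous pair for $y$ and $b$. Using $\lambda\in(0,1)$, the growth bounds \eqref{ls1}--\eqref{ls2}, and the monotonicity of the $\omega_i$ in their spatial arguments to dominate $|x(s)|,|y(s)|$ by $|x|_C,|y|_C$ inside the integrals, I would arrive at
\[
|x|_C\le|a|+\int_0^1\omega_1(s,|x|_C,|y|_C)\,ds,\qquad |a|\le\omega_3(|x|_C,|y|_C),
\]
and, after eliminating $|a|$,
\[
|x|_C\le\int_0^1\omega_1(s,|x|_C,|y|_C)\,ds+\omega_3(|x|_C,|y|_C),
\]
with the symmetric estimate for $|y|_C$ in terms of $\omega_2,\omega_4$.

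The decisive step is to recognise that, on setting $\rho=(|x|_C,|y|_C)$, these two inequalities are exactly the premise of the implication \eqref{410_2}; consequently $\rho\le R_0$, that is $|x|_C\le R_0^{1}$ and $|y|_C\le R_0^{2}$. Feeding this back through \eqref{ls2} and the monotonicity of $\omega_3,\omega_4$ yields $|a|\le\omega_3(R_0^{1},R_0^{2})$ and $|b|\le\omega_4(R_0^{1},R_0^{2})$, so that $|u|_*$ is bounded by a constant depending only on $R_0$ and the $\omega_i$. Choosing $R$ strictly larger than this constant gives the required strict a priori bound, and the Leray--Schauder theorem furnishes a fixed point of $T$.

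I expect the main obstacle to be the estimation step rather than the topological one: one must verify carefully that the Carath\'eodory and monotonicity structure genuinely permits the pointwise values inside $\omega_1,\omega_2$ to be controlled by the $C$-norms uniformly in $t$, and that the factor $\lambda<1$ can be discarded without spoiling any bound. A secondary point to settle is the degenerate situation $|x|_C=0$ or $|y|_C=0$, in which \eqref{410_2} (stated for $\rho\in(0,\infty)^2$) does not apply verbatim; this is dispatched trivially, since a vanishing component already lies below $R_0^{i}$, or by replacing $\rho$ with $\rho+(\varepsilon,\varepsilon)$, invoking the monotonicity of the $\omega_i$, and letting $\varepsilon\to0$.
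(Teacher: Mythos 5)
Your proposal is correct and follows essentially the same route as the paper: an a priori bound for solutions of $u=\lambda T(u)$ obtained from \eqref{ls1}--\eqref{ls2} and the monotonicity of the $\omega_i$, leading exactly to the premise of \eqref{410_2}, hence $\rho\le R_0$ and the bounds $|a|\le\omega_3(R_0)$, $|b|\le\omega_4(R_0)$, after which Leray--Schauder applies. Your closing remark about the degenerate case $|x|_C=0$ or $|y|_C=0$ is a point the paper passes over silently, and your $\varepsilon$-perturbation fix is a legitimate way to handle it.
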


\begin{proof}
The result follows from the Leray-Schauder fixed point theorem once we have
proved the boundedness of the set of all solutions to equation $u=\lambda
T(u),$ for $\lambda\in\left(  0,1\right)  .$ Let $u=(x_{a},y_{b})$\ be such a
solution. Then $x_{a}=\lambda T_{1}(x_{a},y_{b})$ and $y_{b}=\lambda
T_{2}(x_{a},y_{b}),$ or equivalently%
\[
\left\{
\begin{array}
[c]{c}%
(x,a)=\lambda\left(  a+\int_{0}^{t}f_{1}\left(  s,x\left(  s\right)
,y(s)\right)  ds,\ \alpha\lbrack x,y]\right) \\
(y,b)=\lambda\left(  b+\int_{0}^{t}f_{2}\left(  s,x\left(  s\right)
,y(s)\right)  ds,\ \beta\lbrack x,y]\right)  .
\end{array}
\right.
\]
\newline First, we obtain that%
\begin{align}
\left\vert x(t)\right\vert  &  =\lambda\left\vert a+\int_{0}^{t}f_{1}\left(
s,x\left(  s\right)  ,y(s)\right)  ds\right\vert \nonumber\\
&  \leq\left\vert a\right\vert +\int_{0}^{t}\left\vert f_{1}\left(  s,x\left(
s\right)  ,y(s)\right)  \right\vert ds\nonumber\\
&  \leq\left\vert a\right\vert +\int_{0}^{1}\omega_{1}(s,\left\vert
(x(s)\right\vert ,\left\vert y(s)\right\vert )ds\nonumber\\
&  \leq\left\vert a\right\vert +\int_{0}^{1}\omega_{1}(s,\rho_{1},\rho_{2})ds
\label{omega1}%
\end{align}
where $\rho_{1}=\left\vert x\right\vert _{C},$ $\rho_{2}=\left\vert
y\right\vert _{C}.$ Also
\begin{equation}
\left\vert a\right\vert =\left\vert \lambda\alpha\lbrack x,y]\right\vert
\leq\omega_{3}(\rho_{1},\rho_{2}). \label{omega3}%
\end{equation}
Similarly, we have that%
\begin{equation}
\left\vert y(t)\right\vert \leq\left\vert b\right\vert +\int_{0}^{1}\omega
_{2}(s,\rho_{1},\rho_{2})ds \label{omega2}%
\end{equation}
and%
\begin{equation}
\left\vert b\right\vert \leq\omega_{4}(\rho_{1},\rho_{2}). \label{omega4}%
\end{equation}
Then from (\ref{omega1})-(\ref{omega4}), we deduce%
\[
\left\{
\begin{array}
[c]{l}%
\rho_{1}\leq\int_{0}^{1}\omega_{1}(s,\rho_{1},\rho_{2})ds+\omega_{3}(\rho
_{1},\rho_{2})\\
\rho_{2}\leq\int_{0}^{1}\omega_{2}(s,\rho_{1},\rho_{2})ds+\omega_{4}(\rho
_{1},\rho_{2}).
\end{array}
\right.
\]
This by (\ref{410_2}) guarantees that
\begin{equation}
\rho\leq R_{0}. \label{453_2}%
\end{equation}
It follows that
\begin{equation}
\left\vert a\right\vert \leq\omega_{3}(R_{0})=:R_{1}^{1},\text{ \ \ \ }%
\left\vert b\right\vert \leq\omega_{4}(R_{0})=:R_{1}^{2}. \label{rp}%
\end{equation}
Finally (\ref{453_2}) and (\ref{rp}) show that the solutions $u=\left(
x_{a},y_{b}\right)  $ are \textit{a priori} bounded independently on
$\lambda.$ Thus Leray-Schauder's fixed point theorem can be applied.
\end{proof}

\section{Numerical examples}

In what follows, we give two numerical examples that illustrate our theory.

\begin{example}
\label{exx1}{Consider the nonlocal problem}%
\begin{equation}
\left\{
\begin{array}
[c]{l}%
x^{\prime}=\frac{1}{4}\sin x+ay+g\left(  t\right)  \equiv f_{1}(t,x,y),\\
y^{\prime}=\cos\left(  ax+\frac{1}{4}y\right)  +h\left(  t\right)  \equiv
f_{2}(t,x,y),\\
x(0)=\frac{1}{8}\sin\left(  x\left(  \frac{1}{4}\right)  +y\left(  \frac{1}%
{4}\right)  \right)  ,\\
y(0)=\frac{1}{8}\cos\left(  x\left(  \frac{1}{4}\right)  +y\left(  \frac{1}%
{4}\right)  \right)  ,
\end{array}
\right.  \label{ex1}%
\end{equation}
where $t\in\left[  0,1\right]  ,$ $a\in\mathbb{R}$ and{\ }$g,h\in L^{1}\left(
0,1\right)  .${\ We have }$a_{1}=1/4,\ b_{1}=\left\vert a\right\vert
,\ a_{2}=\left\vert a\right\vert ,\ b_{2}=1/4$ and $A_{1}=B_{1}=A_{2}%
=B_{2}=1/8.${\ Consider }$\theta=2.$ {Hence }%
\begin{equation}
M_{\theta}=\left[
\begin{array}
[c]{cc}%
\frac{1}{2} & \left\vert a\right\vert +\frac{1}{4}\\
\left\vert a\right\vert +\frac{1}{4} & \frac{1}{2}%
\end{array}
\right]  .\label{star}%
\end{equation}
Since the eigenvalues of $M_{\theta}$\ are $\lambda_{1}=-\left\vert
a\right\vert +\frac{1}{4},~\lambda_{2}=\left\vert a\right\vert +\frac{3}{4},$
{the matrix (\ref{star}) is convergent to zero if }$\left\vert \lambda
_{1}\right\vert <1$ and $\left\vert \lambda_{2}\right\vert <1.$ It is also
known that a matrix of this type is convergent to zero if $\left\vert
a\right\vert +\frac{1}{4}+\frac{1}{2}<1$ (see \cite{p1}). Therefore, if
$\left\vert a\right\vert <\frac{1}{4},$ {the matrix (\ref{star}) is convergent
to zero and from Theorem 2.1 the problem (\ref{ex1}) has a unique solution.}
\end{example}

\begin{example}
\emph{Consider the nonlocal problem}%
\begin{equation}
\left\{
\begin{array}
[c]{l}%
x^{\prime}=\frac{1}{4}x\sin\left(  \frac{y}{x}\right)  +ay\sin\left(  \frac
{x}{y}\right)  +g(t)\equiv f_{1}(t,x,y),\\
y^{\prime}=ax\sin\left(  \frac{y}{x}\right)  +\frac{1}{4}y\sin\left(  \frac
{x}{y}\right)  +h(t)\equiv f_{2}(t,x,y),\\
x(0)=\frac{1}{8}\sin\left(  x\left(  \frac{1}{4}\right)  +y\left(  \frac{1}%
{4}\right)  \right)  ,\\
y(0)=\frac{1}{8}\cos\left(  x\left(  \frac{1}{4}\right)  +y\left(  \frac{1}%
{4}\right)  \right)  ,
\end{array}
\right.  \label{ex2}%
\end{equation}
where $t\in\left[  0,1\right]  ,$ $a\in\mathbb{R}${\ and }$g,h\in L^{1}\left(
0,1\right)  .${\ Since}%
\begin{align*}
\left\vert f_{1}\left(  t,x,y\right)  \right\vert  &  \leq\frac{1}%
{4}\left\vert x\right\vert +\left\vert a\right\vert \left\vert y\right\vert
+\left\vert g\left(  t\right)  \right\vert \\
\left\vert f_{2}\left(  t,x,y\right)  \right\vert  &  \leq\left\vert
a\right\vert \left\vert x\right\vert +\frac{1}{4}\left\vert y\right\vert
+\left\vert h\left(  t\right)  \right\vert
\end{align*}
{we are under the assumptions from the first part of Section 3. Also, the
matrix }$M_{\theta}${\ is that from Example \ref{exx1} if we consider }%
$\theta=2${. Therefore, according to Theorem 3.1, if that matrix is convergent
to zero}$,${ then the problem (\ref{ex2}) has at least one solution. Note that
the functions }$f_{1}\left(  t,x,y\right)  ,$ $f_{2}\left(  t,x,y\right)  $
{from this example do not satisfy Lipschitz conditions in }$x,y${\ and
consequently Theorem 2.1 does not apply.}
\end{example}

\section*{Acknowledgements}

This work was possible with the financial support of the Sectoral Operational
Programme for Human Resources Development 2007-2013, co-financed by the
European Social Fund, under the project number POSDRU/107/1.5/ S/76841 with
the title "Modern Doctoral Studies: Internationalization and
Interdisciplinarity", and by a grant of the Romanian National Authority for
Scientific Research, CNCS -- UEFISCDI, project number PN-II-ID-PCE-2011-3-0094.

\end{document}